\newcommand{\inlineitem}[1][]{%
	\ifnum\enit@type=\tw@
	{\descriptionlabel{#1}}
	\hspace{\labelsep}%
	\else
	\ifnum\enit@type=\z@
	\refstepcounter{\@listctr}\fi
	\quad\@itemlabel\hspace{\labelsep}%
	\fi} \makeatother
\newcommand{\gb}{\beta}
\newcommand{\gd}{\delta}
\newcommand{\gl}{\lambda}
\newcommand{\gm}{\mu}
\newcommand{\gn}{\nu}
\newcommand{\gp}{\pi}
\newcommand{\gt}{\tau}
\newcommand{\gf}{\phi}
\newcommand{\Gs}{\Sigma}
\newcommand{\Gom}{\Omega}
\newcommand{\subs}{\subset}
\newcommand{\bs}{\backslash}
\newcommand{\nin}{\notin}
\newcommand{\mbb}{\mathbb}
\newcommand{\mcl}{\mathcal}
\newcommand{\us}{\underset}
\newcommand{\os}{\overset}
\newcommand{\lra}{\longrightarrow}
\newcommand{\Z}{\mbb Z}
\newcommand{\es}{\emptyset}
\newcommand{\equ}[1]{%
	\begin{equation*}
		#1
	\end{equation*}
}
\newcommand{\equa}[1]{%
	\begin{equation*}
		\begin{aligned}
			#1
		\end{aligned}
	\end{equation*}
}
\DeclareMathOperator{\Det}{Det}
\DeclareMathOperator{\Aut}{Aut}
\newcommand{\matthree}[9]{%
	\begin{pmatrix}
		#1 & #2 & #3\\ #4 & #5 & #6\\ #7 & #8 & #9
	\end{pmatrix}
}
\newtheorem{theorem}{Theorem}[section]
\newtheorem{ques}[theorem]{Question}
\theoremstyle{definition}
\newtheorem{defn}[theorem]{Definition}
\newtheorem{prob}[theorem]{Problem}
\theoremstyle{remark}
\numberwithin{equation}{section}
\def\namedlabel#1#2{\begingroup
	\def\@currentlabel{#2}%
	\label{#1}\endgroup
}
\newtheorem*{thmOmega}{\bf{Theorem} $\bm{\Gom}$}
\begin{document}
\title[Antipodal Point Arrangements on Spheres]
{Antipodal Point Arrangements on Spheres and Classification of Normal Systems}
\author[C.P. Anil Kumar]{Author: C.P. Anil Kumar*}
\address{Center for Study of Science, Technology and Policy
	\# 18 \& \#19, 10th Cross, Mayura Street,
	Papanna Layout, Nagashettyhalli, RMV II Stage,
	Bengaluru - 560094
	Karnataka,INDIA}
\email{akcp1728@gmail.com}
\thanks{*The author is supported by a research grant and facilities provided by Center for study of Science, 
	Technology and Policy (CSTEP), Bengaluru, INDIA for this research work.}
\subjclass[2010]{Primary: 52C35}
\keywords{Point Arrangements, Antipodal Point Arrangements}
\begin{abstract}
For any positive integer $k>1$, we classify the antipodal point arrangements
on the sphere $S^k$ up to an isomorphism,
by associating a finite complete set of cycle invariants.
\end{abstract}
\maketitle

\section{\bf{Introduction}}
The main motivation to write this article, arises during the association of invariants such as normal systems to the infinity type hyperplane arrangements which is done in C.~P.~Anil Kumar~\cite{NRHA}. The infinity type line arrangements, their nomenclature and some of their properties are discussed in C.~P.~Anil Kumar~\cite{CPAK}.
Here we classify normal systems combinatorially. Before we restate the relevant Problem~\ref{prob:CNS} regarding classification of normal systems we need a few definitions.
 \begin{defn}[Normal System]
	\label{defn:NS}
	~\\
	Let $\mcl{N}=\{L_1,L_2,\ldots,L_n\}$ be a finite set of lines passing through the origin in $\mbb{R}^m$. Let $\mcl{U}=\{\pm v_1,\pm v_2,\ldots,\pm v_n\}$ be a set of antipodal pairs of non-zero vectors on these lines. We say that
	$\mcl{N}$ forms a normal system, if the set 
	\equ{\mcl{B}=\{v_1,v_2,\ldots,v_n\}}
	of vectors has the property that, any subset $\mcl{C}\subs \mcl{B}$ of cardinality at most $m$ is a linearly independent set. 
\end{defn}
\begin{defn}[Convex Positive Bijection and Isomorphic Normal Systems]
	\label{defn:CPB}
	~\\
	Let  \equ{\mcl{N}_1=\{L_1,L_2,\ldots,L_n\},\mcl{N}_2=\{M_1,M_2,\ldots,M_n\}} 
	be two finite sets of lines passing through the origin in $\mbb{R}^m$, both of them have the same 
	cardinality $n$, which form normal systems. Let 
	\equ{\mcl{U}_1=\{\pm v_1,\pm v_2,\ldots,\pm v_n\},\mcl{U}_2=\{\pm w_1,\pm w_2,\ldots,\pm w_n\}} 
	be two sets of antipodal pairs of vectors on these lines in $\mcl{N}_1,\mcl{N}_2$ respectively. We say a bijection 
	$\gd:\mcl{U}_1\lra \mcl{U}_2$ is a convex positive bijection if 
	\equ{\gd(-u)=-\gd(u),u\in \mcl{U}_1} and for any basis $\mcl{B}=\{u_1,u_2,\ldots,u_m\}\subs \mcl{U}_1$
	and a vector $u\in \mcl{U}_1$ we have 
	\equa{u&=\us{i=1}{\os{m}{\sum}}a_iu_i \text{ with }a_i>0, 1\leq i \leq m, \text{ if and only if },\\
		\gd(u)&=\us{i=1}{\os{m}{\sum}}b_i\gd(u_i) \text{ with }b_i>0, 1\leq i \leq m.}
	We say two normal systems are isomorphic if there exists a convex positive bijection between their 
	corresponding sets of antipodal pairs of vectors. 
\end{defn}
Now we mention the relevant open problem regarding classification of normal systems.
\begin{prob}[Classification of Normal Systems and Finding Representatives in Each Isomorphism Class]
\label{prob:CNS}
~\\
Let $n,m$ be positive integers.
Classify and enumerate the normal systems in $\mbb{R}^m$ up to an isomorphism by associating invariants which can be used to 
easily construct a family of normal systems representing each isomorphism class for every positive integer 
cardinality $n$ of the normal system.
\end{prob}
Here in this article we classify normal systems up to an isomorphism by associating a finite complete 
set of cycle invariants. The enumeration problem of the number of 
isomorphism classes of normal systems and the problem of representing their isomorphism classes 
by a well defined list of representatives still remain open (refer to Question~\ref{ques:ENUMREP}) and known only for an initial few values of $n$ for any $m$.
The exact statement of main Theorem~\ref{theorem:LCI} about classification of normal systems cannot be stated here as it requires more definitions and concepts
which at present are not motivated and developed. Hence we defer the statement to its appropriate Section~\ref{sec:APAHDSCNS} of the article.

\subsection{\bf{Brief Survey and the Structure of the Paper}}
With relevance to antipodal point arrangements (refer to Definition~\ref{defn:kDAPA}) or normal systems, the theory of matroids
is a well studied subject. Matroids are combinatorial abstractions of vector configurations and hyperplane arrangements.
Here in this article 
we study specific kind of antipodal pairs of vectors arranged on spheres, vector configurations,
which are associated to normal systems that arise
from hyperplane arrangements and classify them combinatorially. The method of associating cycle invariants
as a combinatorial model to point arrangements in 
the plane has already been explored by authors J.E.Goodman and R.Pollack~\cite{MR0583961}.
Also the slope problem mentioned in chapter $10$, page $60$ in M.~Aigner and G.~M.~Ziegler~\cite{MR3823190},
Proofs from THE BOOK, explains a similar method.

Section~\ref{sec:APA2D} is devoted to the classification of antipodal point arrangements on $S^2$ in 
two dimensions. Theorem~\ref{theorem:MITTD} states the classification theorem in the case of dimension two.
Section~\ref{sec:NonIsoNS} revisits the two non-isomorphic examples of normal systems in dimension three
that are mentioned in C.~P.~Anil~Kumar~\cite{NRHA} and computes the combinatorial invariants.
Sections~[\ref{sec:APAHDSCNS}-\ref{sec:FMT}] are devoted to classification of antipodal point arrangements on $S^k$ 
in higher dimensions for $k>2$. Theorem~\ref{theorem:LCI} in Section~\ref{sec:FMT} states the classification theorem in higher dimensions. 
In final Section~\ref{sec:ENUMREP} we pose open Question~\ref{ques:ENUMREP} about normal systems.

\section{\bf{Antipodal Point Arrangements on the $2\operatorname{-}$Sphere $S^2$}}
\label{sec:APA2D}
~\\
Now we define antipodal point arrangements on the $2\operatorname{-}$sphere $S^2$.
\begin{defn}[Antipodal Point Arrangement on the $2\operatorname{-}$Sphere $S^2$]
\label{defn:2DAPA}
~\\
We say a set $\mcl{P}_n=\{\pm P_1,\pm P_2,\ldots,\pm P_n\}\subs S^2$ of points is a point 
arrangement on the sphere if three points of $\mcl{P}_n$ are linearly dependent then some two of them are antipodal.
\end{defn}
\begin{defn}[Isomorphism Between two Antipodal Point Arrangements on the $2\operatorname{-}$Sphere $S^2$]
\label{defn:2DIso}
~\\
Two point arrangements 
\equ{\mcl{P}_n=\{\pm P_1,\pm P_2,\ldots,\pm P_n\},\mcl{Q}_m=\{\pm Q_1,\pm Q_2,\ldots,\pm Q_m\} \subs 
S^2} 
are isomorphic if $n=m$ and there is a bijection $\gf:\mcl{P}_n \lra \mcl{Q}_n$ between the two sets such that the 
following occurs.
\begin{itemize}
\item $\gf(-A)=-\gf(A)$ for all $A\in \mcl{P}_n$.
\item  for any $A,B,C,D\in \mcl{P}_n$ if $D$ is a positive combination of $A,B,C$ if and only if 
$\gf(D)$ is a positive combination of $\gf(A),\gf(B),\gf(C)$. 
\end{itemize}
We sometimes also say that the isomorphism $\gf$ is a convex positive bijection.
We say $\gf$ is orientation preserving if for any three points $A,B,C\in \mcl{P}_n$ the ordered triple $(A,B,C)$ has positive determinant if and only if the ordered triple $(\gf(A),\gf(B),\gf(C))$ has positive determinant. We say $\gf$ is orientation reversing if for any three points $A,B,C\in \mcl{P}_n$ the ordered triple $(A,B,C)$ has positive determinant if and only if the ordered triple $(\gf(A),\gf(B),\gf(C))$ has negative determinant.
\end{defn}
\begin{theorem}
\label{theorem:PreservingReversing}
Let \equ{\mcl{P}_n=\{\pm P_1,\pm P_2,\ldots,\pm P_n\},\mcl{Q}_n=\{\pm Q_1,\pm Q_2,\ldots,\pm Q_n\} \subs S^2} 
be two antipodal point arrangements on the $2\operatorname{-}$Sphere $S^2$. If $\gd:\mcl{P}_n \lra \mcl{Q}_n$ is an isomorphism then it is either an orientation preserving isomorphism or it is an orientation reversing isomorphism. 
\end{theorem}
\begin{proof}
Let $R_1,R_2,R_3\in \mcl{P}_n$ be linearly independent. Let $S_1=\gd(R_1),S_2=\gd(R_2),S_3=\gd(R_3)\in \mcl{Q}_n$. We show that the sign of the product \equ{\Det(R_1,R_2,R_3)\Det(S_1,S_2,S_3)} does not change for any such choice of three linearly independent elements $R_1,R_2,R_3$. If we change signs of $R_i$ for some $1\leq i\leq 3$ or interchange $R_i,R_j$ for some $1\leq i\neq j\leq 3$ and do the same operations with $S_i,1\leq i\leq 3$ then it is clear that the sign of product remains unchanged. It is also clear that if $R\in \mcl{P}_n$ and $R\nin \{\pm R_1,\pm R_2, \pm R_3\}$
then there exists $a,b,c\in \mbb{R}^{*}$ and $d,e,f\in \mbb{R}^{*}$ such that we have 
\equa{&sign(a)=sign(d),sign(b)=sign(e),sign(c)=sign(f)\text{ and }\\ &R=aR_1+bR_2+cR_3,S=\gd(R)=dS_1+eS_2+fS_3.}
Hence again the sign of \equa{\Det(aR_1+bR_2+cR_3,R_2,R_3)&\Det(dS_1+eS_2+fS_3,S_2,S_3)=\\
&\Det(R,R_2,R_3)\Det(S,S_2,S_3)} does not change. Here we have replaced $R_1$ by $R$ and $S_1$ by $S$. 
This way we have that the sign does not change for any such choice of elements $R_1,R_2,R_3$ which proves the theorem.
\end{proof}
\subsection{Algebraic Symbols Associated to a Four-Antipodal Point Arrangement on the Sphere $S^2$}
We begin this section with the standard arrangement.
\subsubsection{\bf{The Standard Arrangement and its Associated Symbols}}
\label{sec:SAAS}
The arrangement $\mcl{S}_4 \subs S^2$ consists of four antipodal pairs of points given by 
\equa{\mcl{S}_4=\{&x=[(1,0,0)],x=[(-1,0,0)],y=[(0,1,0)],-y=[(0,-1,0)],\\& z=[(0,1,0)], -z=[(0,-1,0)],
P \text{ \big(a point in } I\operatorname{-}Octant\big),\\&-P \text{ \big(antipode point of } P \text{ in } VII\operatorname{-}Octant\big)\} \subs S^2}
There are twenty four symbols that we associate to this standard arrangement. Before we actually describe these
symbols we mention four important aspects.
\begin{enumerate}
\item A symbol is of the form 
\equ{a\lra (b,c,d)}
\item We say that it is compatible or associated to an antipodal point arrangement if $a,b,c,d$ represent elements
of the arrangement in $S^2$ such that $a$ is a positive combination of $b,c,d$.
\item If we give an anticlockwise local orientation to the plane $a^{\perp}$ with the direction ray 
$a\in S^2$ representing the thumb then ignoring signs the line cycle is clockwise oriented and is given by 
\equ{(bcd), \text{ and not by }(bdc).}
For example to get the symbol $P\lra (y,x,z)$ for $\mcl{S}_4$ refer to the first octant view in Figure~\ref{fig:One}. Here the line cycle $(yxz)$ is obtained by moving clockwise around $P$. 
\item \label{enum:SymbolNegDet}
The ordered triple $(b,c,d)$ where $a$ is a positive combination of $b,c,d$ has negative determinant.
\end{enumerate}
The associated symbols for the standard arrangement $\mcl{S}_4$ are given by  
\equa{&P \lra (y,x,z), P \lra (x,z,y),P \lra (z,y,x),\\
&x\lra (-y,P,-z),x\lra (P,-z,-y),x\lra (-z,-y,P),\\
&z\lra (-x,P,-y),z\lra (P,-y,-x),z\lra (-y,-x,P),\\
&y\lra (-z,P,-x),y\lra (-x,-z,P),y\lra (P,-x,-z),\\
&-x\lra (-P,y,z),-x\lra (z,-P,y),-x\lra (y,z,-P),\\
&-z\lra (-P,x,y),-z\lra (x,y,-P),-z\lra (y,-P,x),\\
&-y\lra (-P,z,x),-y\lra (x,-P,z),-y\lra (z,x,-P),\\
&-P\lra (-x,-y,-z),-P\lra (-z,-x,-y),-P\lra (-y,-z,-x).}
In the above symbols the triples are all negatively (clockwise) oriented, that is, given that $P$ is in the first octant
the triples have determinant negative. 

\subsubsection{\bf{The Symmetry Group on Four Elements and its Action on Symbols}}
~\\
Here we explore the symmetry involved in the above set of $24$ compatible symbols. 
We state the following theorem on the action of the symmetry group $S_4$ on the set of symbols and 
describe the transitive orbits.
\begin{theorem}
\label{theorem:S4Action}
The group $S_4$ acts on the set 
\equa{S&=\{p\lra (q,r,s)\mid p,q,r,s \in \{\pm a,\pm b,\pm c,\pm d\}\text{ such that }\\
&\{\pm p\}\cap \{\pm q\}=\{\pm p\}\cap \{\pm r\}=\{\pm p\}\cap \{\pm s\}=\{\pm q\}\cap \{\pm r\}=\\
&\{\pm q\}\cap \{\pm s\}=\{\pm r\}\cap \{\pm s\}=\es\}}
of all symbols with the action given by
\equa{p\lra (q,r,s) \text{ Apply } (12) \text{ to get } -p \lra (-r,-q,-s)\\
p\lra (q,r,s) \text{ Apply } (23) \text{ to get } r \lra (-q,p,-s)\\
p\lra (q,r,s) \text{ Apply } (34) \text{ to get } s \lra (-q,-r,p)\\
p\lra (q,r,s) \text{ Apply } (14) \text{ to get } -p \lra (-s,-r,-q).}
\begin{itemize}
\item The set $S$ has $384$ elements. Then each transitive orbit of an element under the action of $S_4$ contains 
$24$ elements. There are $16$ orbits.
\item There are $8$ orbits \emph{(}192 elements satisfying property~\emph{\ref{enum:SymbolNegDet})} 
that arise as compatible symbols associated to concrete four-antipodal point arrangements.
\item Each transitive orbit is the set of all compatible symbols corresponding to one fixed four antipodal pairs of 
points of the point arrangement on the sphere $S^2$ provided one of the symbols in the orbit is 
compatible.
\item Moreover the action of $S_4$ on the set $S$ is free. 
\end{itemize}
\end{theorem}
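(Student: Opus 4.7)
The plan is to treat the theorem as a bookkeeping exercise that the explicit action formulas make feasible, and I would handle the four bullet points in sequence. First I count $|S|$: a symbol $p \lra (q,r,s)$ is specified by assigning one of four antipodal pairs (with a sign) to each of the four positions, giving $|S| = 4! \cdot 2^4 = 384$. Next I verify that the given formulas define a genuine $S_4$-action: since $\{(12),(23),(34)\}$ generate $S_4$, it suffices to check the Coxeter relations, namely that each generator squares to the identity (a direct computation on the generic symbol $p \lra (q,r,s)$), that the braid relations $(s_i s_{i+1})^3 = \mathrm{id}$ hold for $i=1,2$, and that $(12)$ commutes with $(34)$. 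Each of these reduces to substituting into a generic symbol and tracking signs. I would also confirm that the stated formula for $(14)$ coincides with the composition $(12)(23)(34)(23)(12)$ derived from the other three generators, so that the four-formula description is internally consistent.

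For the orbit count and freeness, I would take the symbol $a \lra (b,c,d)$ and list its $24$ images under the elements of $S_4$ (writing each group element as a word in the generators and applying the action step by step). Pairwise distinctness of these $24$ images gives triviality of the stabilizer of $a \lra (b,c,d)$, and since the action depends only on the formal shape $p \lra (q,r,s)$ rather than on which concrete labels appear, freeness at this symbol propagates to all of $S$. Consequently every orbit has maximal size $24$, producing $|S|/24 = 16$ orbits.

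The most delicate step is checking that the compatibility condition, property~\ref{enum:SymbolNegDet} from Section~\ref{sec:SAAS}, is preserved by the action. For the $(23)$ generator, if $p = \alpha q + \beta r + \gamma s$ with $\alpha,\beta,\gamma > 0$ and $\det[q,r,s] < 0$, then solving for $r$ yields $r = (1/\beta)p + (\alpha/\beta)(-q) + (\gamma/\beta)(-s)$, and the multilinearity and skew-symmetry of $\det$ give $\det[-q,p,-s] = \beta \det[q,r,s] < 0$, so the image $r \lra (-q,p,-s)$ is again compatible. Analogous computations cover $(12)$, $(34)$, and $(14)$. To count compatible symbols, I would fix a concrete realization of four antipodal pairs in general position in $\mbb{F}^3$: each of the eight signed choices of apex admits exactly three compatible orderings (the signs of the remaining entries are forced by positivity, and exactly three of the six orderings are negatively oriented), giving $24$ compatible symbols per realization, which by the above constitutes one full $S_4$-orbit. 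Varying the octant of the fourth pair relative to a basis of the other three yields eight inequivalent concrete realizations, each contributing $24$ compatible symbols for a total of $192$; equivalently, every formal symbol is positively compatible for exactly one realization, and within each realization precisely half of the $48$ positively compatible symbols are also negatively oriented. Thus $8 \cdot 24 = 192$ compatible symbols populate $8$ of the $16$ orbits, while the other $8$ orbits consist of symbols failing the negative-determinant condition. The main obstacle is the systematic sign bookkeeping across repeated applications of the action, which is laborious but purely routine.
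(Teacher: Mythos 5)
Your proposal is correct and follows essentially the same route as the paper: count $|S|=4!\cdot 2^4=384$, verify the defining relations of $S_4$ on the generic symbol to get a well-defined action, identify each $24$-element orbit of a compatible symbol with the full compatible set of a concrete realization, and deduce freeness and the $16=8+8$ orbit split. The one place you go beyond the paper is in actually supplying the determinant/positivity computation showing that compatibility is preserved by the generators and the octant count giving exactly $8$ realized orbits --- steps the paper's proof leaves as ``immediate'' --- and those computations are correct as written.
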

\begin{proof}
We have $\#(S)=4*3!*2^4=384$. We observe that the action is compatible with the relations
\equa{&(12)(23)(12)=(23)(12)(23),(23)(34)(23)=(34)(23)(34),\\
&(34)(14)(34)=(14)(34)(14),(12)(34)=(34)(12),(14)(23)=(23)(14),\\
&(12)^2=(23)^2=(34)^2=(41)^2=identity.}
So we have an action of the symmetric group $S_4$ on the set $S$ of symbols.
The set of compatible symbols, as a transitive orbit, obtained by the action of $S_4$ on the compatible
symbol $P\lra (y,x,z)$ is precisely the above given $24$ compatible symbols of the standard arrangement
in Section~\ref{sec:SAAS}. Similarly for every transitive orbit if one of the symbols is compatible then all 
the remaining $23$ symbols of the orbit are compatible. The rest of the proof of the theorem is immediate.
\end{proof}
\subsubsection{\bf{The Standard Arrangement and the Dictionary of Line-Cycles}}
Here we associate line cycles to the points of the standard arrangement $\mcl{S}_4$. 
Later we use this as a local dictionary for an antipodal arrangement on $S^2$ to characterize the arrangement up to an isomorphism.

Consider the standard four-antipodal point arrangement $\mcl{S}_4$  given by 
\equa{P_1&=x=[(1,0,0)],-P_1=-x=[(-1,0,0)],\\
P_2&=y=[(0,1,0)],-P_2=-y=[(0,-1,0)],\\
P_3&=z=[(0,1,0)],-P_3=-z=[(0,-1,0)],\\
P_4&=P \text{ a point in } I\operatorname{-}Octant,\\
-P_4&=-P \text{ its antipode point in } VII\operatorname{-}Octant \text{ in }S^2}
The compatible $24$ symbols (an $S_4$ transitive orbit) gives rise to the following dictionary of line cycles using subscripts $\{1,2,3,4\}$ and symbols $\{+,-\}$ at 
each point of the arrangement with $(x,y,z)$ denoting a positively oriented basis of the arrangement.
\equa{&\gt_4^{+}=(213)\text{ at }P_4,\gt_4^{-}=(231)\text{ at }-P_4,\\
&\gt_3^{+}=(142)\text{ at }P_3,\gt_3^{-}=(124)\text{ at }-P_3,\\
&\gt_2^{+}=(341)\text{ at }P_2,\gt_2^{-}=(314)\text{ at }-P_2,\\
&\gt_1^{+}=(243)\text{ at }P_1,\gt_1^{-}=(234)\text{ at }-P_1.}

Now we prove a theorem that given the dictionary of line cycles
there is a unique way to recover back the $24$ compatible
symbols, an $S_4$ orbit of the arrangement, which is compatible with the standard arrangement.

We state the theorem as follows.
\begin{theorem}
\label{theorem:FPSAISO}
Let $\mcl{P}_4=\{\pm P_1,\pm P_2,\pm P_3,\pm P_4\}\subs S^2$ be any four-antipodal point 
arrangement on the sphere. Suppose the line cycles are given by
\equa{&\gt_4^{+}=(213)\text{ at }P_4,\gt_4^{-}=(231)\text{ at }-P_4,\\
&\gt_3^{+}=(142)\text{ at }P_3,\gt_3^{-}=(124)\text{ at }-P_3,\\
&\gt_2^{+}=(341)\text{ at }P_2,\gt_2^{-}=(314)\text{ at }-P_2,\\
&\gt_1^{+}=(243)\text{ at }P_1,\gt_1^{-}=(234)\text{ at }-P_1.}
Then the map $\gd: \mcl{P}_4\lra \mcl{S}_4$ given by 
\equa{\gd:&P_1\lra x,-P_1\lra -x,P_2\lra y,-P_2\lra -y,\\
&P_3\lra z,-P_3\lra -z,P_4\lra P,-P_4\lra -P}
is an isomorphism, that is, it is a convex positive bijection. Also $-\gd$ is an isomorphism.
The $S_4$ invariant set of $24$ compatible symbols are given by
\equa{P_4 &\lra (P_2,P_1,P_3), P_4 \lra (P_1,P_3,P_2),P_4 \lra (P_3,P_2,P_1),\\
P_1&\lra (-P_2,P_4,-P_3),P_1\lra (P_4,-P_3,-P_2),P_1\lra (-P_3,-P_2,P_4),\\
P_3&\lra (-P_1,P_4,-P_2),P_3\lra (P_4,-P_2,-P_1),P_3\lra (-P_2,-P_1,P_4),\\
P_2&\lra (-P_3,P_4,-P_1),P_2\lra (-P_1,-P_3,P_4),P_2\lra (P_4,-P_1,-P_3),\\
-P_1&\lra (-P_4,P_2,P_3),-P_1\lra (P_3,-P_4,P_2),-P_1\lra (P_2,P_3,-P_4),\\
-P_3&\lra (-P_4,P_1,P_2),-P_3\lra (P_1,P_2,-P_4),-P_3\lra (P_2,-P_4,P_1),\\
-P_2&\lra (-P_4,P_3,P_1),-P_2\lra (P_1,-P_4,P_3),-P_2\lra (P_3,P_1,-P_4),\\
-P_4&\lra (-P_1,-P_2,-P_3),-P_4\lra (-P_3,-P_1,-P_2),\\
-P_4&\lra (-P_2,-P_3,-P_1).}
\end{theorem}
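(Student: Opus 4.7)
The plan is to derive a single compatible symbol for $\mcl{P}_4$ from the prescribed line-cycle data, and then invoke Theorem~\ref{theorem:S4Action}. Specifically, once I establish that $P_4\lra(P_2,P_1,P_3)$ is a compatible symbol for $\mcl{P}_4$, its $S_4$-orbit is exactly the list of $24$ compatible symbols in the statement, and these match term-by-term, via the bijection $\gd$, the $24$ compatible symbols of $\mcl{S}_4$ attached to $P\lra(y,x,z)$ in Section~\ref{sec:SAAS}. By Definition~\ref{defn:2DIso} the bijection $\gd$ (and equally $-\gd$, because simultaneously swapping each point for its antipode preserves positive-combination relations) is then a convex positive bijection, i.e.\ an isomorphism of antipodal point arrangements.

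First I would recall the convention tying a line cycle at a point to the compatible symbol there. At a point $P_i$ in general position, the unique signed basis among $\{\pm P_j:j\neq i\}$ whose positive cone contains $P_i$ selects the signs appearing in the compatible symbol at $P_i$, while the negative-determinant convention selects the cyclic order of the three entries; this cyclic order of subscripts is by definition the line cycle $\gt_i^{+}$. Consequently the hypothesis $\gt_4^{+}=(213)$ forces the compatible symbol at $P_4$ to be $P_4\lra(\epsilon_2P_2,\epsilon_1P_1,\epsilon_3P_3)$ with negative determinant, for a unique but as-yet-unknown sign-tuple $(\epsilon_1,\epsilon_2,\epsilon_3)\in\{\pm1\}^3$.

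To fix the signs, I would cross-check against the remaining seven prescribed line cycles. Any candidate sign choice at $P_4$ propagates, via the equation $P_4=\sum_{j=1}^{3}\epsilon_j a_jP_j$ with $a_j>0$, to predicted compatible symbols at $\pm P_1,\pm P_2,\pm P_3,-P_4$ by solving the equation for each $P_j$ and taking antipodes. A case analysis over the eight sign choices shows that only $(\epsilon_1,\epsilon_2,\epsilon_3)=(+,+,+)$ simultaneously reproduces $\gt_1^{+}=(243)$, $\gt_2^{+}=(341)$, $\gt_3^{+}=(142)$ together with their antipodal reversals $\gt_i^{-}$. This yields the single compatible symbol $P_4\lra(P_2,P_1,P_3)$ in $\mcl{P}_4$, and Theorem~\ref{theorem:S4Action} then produces its $S_4$-orbit as the full list of $24$ compatible symbols displayed in the conclusion of the theorem.

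The main obstacle is the sign-matching case analysis just sketched: it is routine but requires writing out, for each of the seven alternative sign-tuples at $P_4$, the induced line cycles at the remaining seven positions and exhibiting at least one mismatch with the prescribed data. Equivalently, one must verify that the assignment from a compatible symbol at $P_4$ (varying over its eight sign-tuples) to the induced $8$-tuple of line cycles at $\{\pm P_1,\pm P_2,\pm P_3,\pm P_4\}$ is injective. Once this bookkeeping is discharged, everything else is a direct application of Theorem~\ref{theorem:S4Action} and a formal comparison with the standard arrangement $\mcl{S}_4$ of Section~\ref{sec:SAAS}.
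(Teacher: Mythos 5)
Your proposal follows essentially the same route as the paper: the paper likewise reduces everything to showing that $P_4\lra(P_2,P_1,P_3)$ is the unique compatible symbol consistent with the prescribed line cycles, by running through the eight sign choices $(\pm P_1,\pm P_2,\pm P_3)$ and using the $S_4$-action of Theorem~\ref{theorem:S4Action} to propagate each candidate symbol to an induced line cycle at another point that contradicts the hypothesis, exactly the elimination you sketch. The only difference is that the paper explicitly writes out the seven contradictions that you defer as bookkeeping, so your plan is correct and complete in outline.
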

\begin{proof}
Let us denote 
\equa{\{P_1&=x,-P_1=-x,P_2=y,-P_2=-y,\\P_3&=z,-P_3=-z,P_4=P,-P_4=-P\}.}
The octant views are given in Figure~\ref{fig:One} based on the point $P$ lying in various octants
with respect to a positively oriented system $(x,y,z)$.
\begin{figure}[h]
	\centering
	\includegraphics[width = 1.0\textwidth]{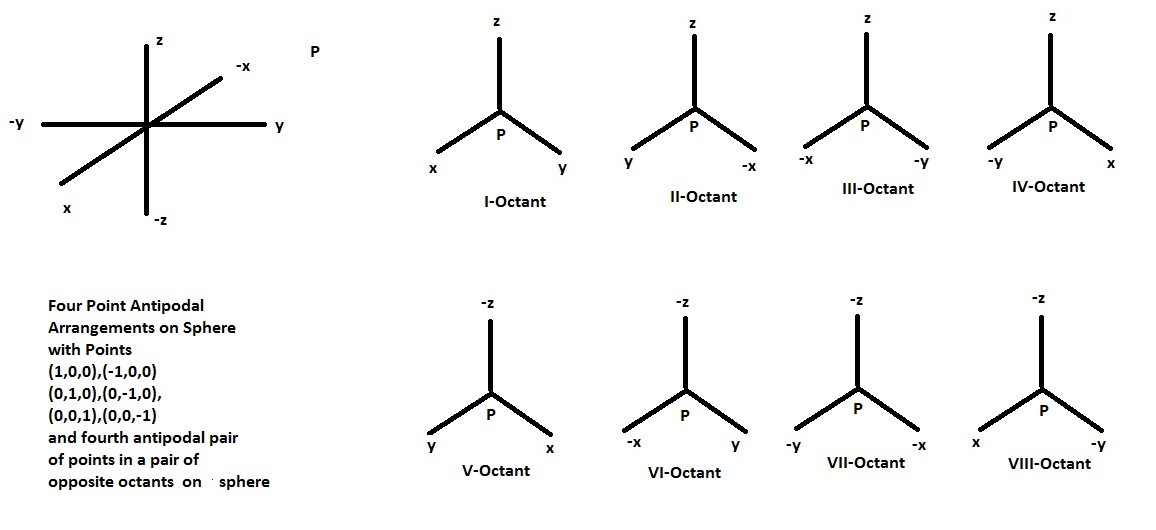}
	\caption{Four Point Arrangements on the Sphere $S^2$}
	\label{fig:One}
\end{figure}
However first we show that $(P_1,P_2,P_3)$ is positively oriented, that is, its determinant is positive
and the symbol $P_4\lra (P_2,P_1,P_3)$ is compatible. A priori we do not know the orientation of $(P_1,P_2,P_3)$
and the compatibility signs of the symbols.

Consider the following choices. $(\pm P_1,\pm P_2,\pm P_3)$. Out of these 
\equ{(P_1,P_2,P_3),(-P_1,-P_2,P_3),(P_1,-P_2,-P_3),(-P_1,P_2,-P_3)}
have the same sign of the determinant and the remaining
\equ{(-P_1,-P_2,-P_3),(P_1,P_2,-P_3),(-P_1,P_2,P_3),(P_1,-P_2,P_3)}
have the same sign of the determinant.

If the second set of determinants are positive then we argue as follows using Theorem~\ref{theorem:S4Action}.
Suppose $P_4\lra (-P_2,-P_1,-P_3)$ is compatible then we have $-P_1\lra (P_2,P_4,P_3)$ is compatible.
Hence $\gt_1^{-}=(243)$ which is invalid.
Suppose $P_4\lra (P_2,P_1,-P_3)$ is compatible then we have $-P_3\lra (-P_1,P_4,-P_2)$ is compatible.
Hence $\gt_3^{-}=(142)$ which is invalid.
Suppose $P_4\lra (P_2,-P_1,P_3)$ is compatible then we have $-P_1\lra (-P_2,P_4,-P_3)$ is compatible.
Hence $\gt_1^{-}=(243)$ which is invalid.
Suppose $P_4\lra (-P_2,P_1,P_3)$ is compatible then we have $-P_2\lra (-P_3,P_4,-P_1)$ is compatible.
Hence $\gt_2^{-}=(341)$ which is invalid.

If the first set of determinants are positive then we argue as follows using Theorem~\ref{theorem:S4Action}.
We have $P_4\lra (-P_2,-P_1,P_3),P_4\lra (-P_2,P_1,-P_3),P_4\lra (P_2,-P_1,-P_3)$ also give invalid line cycles. 
Hence we conclude that $(P_1,P_2,P_3)$ is positively oriented and the symbol $P_4\lra (P_2,P_1,P_3)$
is compatible.

This proves the theorem. We also note that the over all total flip given by 
\equa{\gn: \mcl{P}_4\lra \mcl{S}_4,\gn:&P_1\lra -x,-P_1\lra x,P_2\lra -y,-P_2\lra y,\\
&P_3\lra -z,P_3\lra z,P_4\lra -P,-P_4\lra P}
is also an isomorphism, that is, a convex positive bijection. Using these line cycles we can write down all the $S_4$ invariant set of
$24$ compatible symbols.

\end{proof}
There are other isomorphisms from $\mcl{P}_4$ to $\mcl{S}_4$ as well and below we describe all of them via the automorphism 
group $\Aut(\mcl{S}_4)$.

\subsubsection{\bf{Automorphism Group of the Standard Antipodal Point Arrangement}}
Here we compute the automorphism group of the standard antipodal point arrangement.
\begin{theorem}
Let $\mcl{S}_4\subs S^2$ be the standard arrangement. Then 
\equ{\Aut(\mcl{S}_4)=S_4 \oplus (\Z/2\Z).}
\end{theorem}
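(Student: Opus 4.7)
My plan is to exhibit $\Aut(\mcl{S}_4)$ as the internal direct product of a copy of $S_4$ and a central copy of $\Z/2\Z$, and then to cap its order at $48$ via Theorem~\ref{theorem:FPSAISO}. The $\Z/2\Z$ factor is immediate: the antipodal map $\gn\colon A\mapsto -A$ is a convex positive bijection, since $D=a_1A+a_2B+a_3C$ with $a_i>0$ implies $\gn(D)=a_1\gn(A)+a_2\gn(B)+a_3\gn(C)$; because every automorphism preserves the antipodal pairing, $\gn$ is central in $\Aut(\mcl{S}_4)$, so $\langle\gn\rangle\cong\Z/2\Z$ sits centrally.

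For the $S_4$ factor, for each $\gs\in S_4$ I define a map $\Psi_\gs\colon\mcl{S}_4\lra\mcl{S}_4$ by $\Psi_\gs(P_i)=\gepv_i(\gs)\,P_{\gs(i)}$, where the signs $\gepv_i(\gs)\in\{\pm 1\}$ are chosen so that the relabeled quadruple $\{\gepv_i(\gs)P_{\gs(i)}\}_{i=1}^{4}$ stands in the same convex relationships as the original $\{P_i\}_{i=1}^{4}$ in $\mcl{S}_4$. Concretely, writing the unique expansion $P_{\gs(4)}=\gl_1P_{\gs(1)}+\gl_2P_{\gs(2)}+\gl_3P_{\gs(3)}$, I take $\gepv_i(\gs)=\mrm{sign}(\gl_i)$ for $i\le 3$ and $\gepv_4(\gs)=+1$; the only other admissible sign pattern is $-\gepv(\gs)$, and that alternative map is just $\gn\circ\Psi_\gs$. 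With this prescription the line cycles of the $\gs$-relabeled arrangement coincide cyclically with those of $\mcl{S}_4$, so Theorem~\ref{theorem:FPSAISO} guarantees that identity-on-subscripts is an isomorphism; hence $\Psi_\gs$ is a bona fide automorphism. The map $\gs\mapsto\Psi_\gs$ is a group homomorphism---verified on a generating set of transpositions by direct composition, which amounts to checking a $1$-cocycle identity for the sign assignment---and is plainly injective.

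The subgroups $\langle\Psi_\gs:\gs\in S_4\rangle$ and $\langle\gn\rangle$ commute elementwise (both respect antipodation) and intersect trivially, since the only index-preserving $\Psi_\gs$ is $\Psi_{\mrm{id}}=\id{\mcl{S}_4}\neq\gn$. Thus $S_4\oplus(\Z/2\Z)\hookrightarrow\Aut(\mcl{S}_4)$ as a subgroup of order $48$. For the reverse bound, any $\gf\in\Aut(\mcl{S}_4)$ induces a permutation $\gs\in S_4$ on the four antipodal pairs; applying Theorem~\ref{theorem:FPSAISO} to $\mcl{S}_4$ under the $\gs^{-1}$-relabeling, with the sign adjustment $\gepv(\gs^{-1})$ that aligns the line cycles, the resulting isomorphism back to $\mcl{S}_4$ is determined uniquely up to the overall flip $\gn$. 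Hence $|\Aut(\mcl{S}_4)|\le 2\cdot|S_4|=48$, and equality with the embedded subgroup completes the proof.

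The principal technical obstacle is the construction of $\Psi_\gs$---in particular, checking that the sign prescription really yields a convex positive bijection on \emph{all} of $\mcl{S}_4$ (not merely the defining positive combination $P_4=aP_1+bP_2+cP_3$), and that $\gs\mapsto\Psi_\gs$ is genuinely a homomorphism rather than a set-theoretic section of an extension. The cleanest strategy is to reduce everything to the line-cycle invariants: since these form a complete combinatorial invariant of four-antipodal arrangements up to the overall flip by Theorem~\ref{theorem:FPSAISO}, verifying that $\Psi_\gs$ preserves the line cycles---a permutation identity that follows directly from the sign choice---suffices to conclude that it preserves all positive combinations.
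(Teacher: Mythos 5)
Your overall architecture (the $48$ explicit maps $\pm\Psi_\gs$, the centrality of the antipodal flip $\gn$, and the upper bound $48$ via Theorem~\ref{theorem:FPSAISO}) matches the paper's proof in spirit, but there is a genuine gap: with your normalization $\gepv_4(\gs)=+1$, the assignment $\gs\mapsto\Psi_\gs$ is \emph{not} a group homomorphism, so the internal direct product decomposition is not established. Concretely, take $P_1=[(1,0,0)]$, $P_2=[(0,1,0)]$, $P_3=[(0,0,1)]$, $P_4=[(1,1,1)]$. Your prescription gives $\Psi_{(14)}\colon P_1\mapsto P_4,\ P_2\mapsto -P_2,\ P_3\mapsto -P_3,\ P_4\mapsto P_1$ and $\Psi_{(24)}\colon P_1\mapsto -P_1,\ P_2\mapsto P_4,\ P_3\mapsto -P_3,\ P_4\mapsto P_2$, while $(14)(24)=(142)$ and $\Psi_{(142)}\colon P_1\mapsto P_4,\ P_2\mapsto -P_1,\ P_3\mapsto -P_3,\ P_4\mapsto P_2$. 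Composing, $\Psi_{(14)}\circ\Psi_{(24)}$ sends $P_1\mapsto -P_4$, and in fact $\Psi_{(14)}\circ\Psi_{(24)}=\gn\circ\Psi_{(142)}\neq\Psi_{(142)}$. So the claimed $1$-cocycle identity already fails on transpositions: your $\Psi$ is only a set-theoretic section of the central extension $1\to\langle\gn\rangle\to\Aut(\mcl{S}_4)\to S_4\to 1$, and what your argument actually yields is $|\Aut(\mcl{S}_4)|=48$ with central $\Z/2\Z$ and quotient $S_4$. That much is also true of the nonsplit double covers of $S_4$ of order $48$, so it does not yet give $S_4\oplus(\Z/2\Z)$. (Relatedly, your claim that the relabeled line cycles coincide with the standard ones is false for this normalization: for $(142)$ above the image of the compatible symbol $P_4\lra(P_2,P_1,P_3)$ is $P_2\lra(-P_1,P_4,-P_3)$, which is \emph{not} compatible, so the relabeled cycles are the inverses and Theorem~\ref{theorem:FPSAISO} applies only through its total-flip clause. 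Each $\Psi_\gs$ is therefore still an automorphism, but the family $\{\Psi_\gs\}$ is not closed under composition.)

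The repair is to normalize the sign pattern by orientation rather than by $\gepv_4(\gs)=+1$: of the two admissible patterns $\pm\gepv(\gs)$, choose the one for which the image of the compatible symbol $P_4\lra(P_2,P_1,P_3)$ is again a compatible symbol (equivalently, for which the determinant of the image of an ordered triple of representatives keeps its sign). This is exactly what the paper does: it indexes the orientation-preserving automorphisms by the $24$ compatible symbols, which by Theorem~\ref{theorem:S4Action} form a free transitive $S_4$-set; those automorphisms act freely and transitively on this set, commuting with the formal $S_4$-action, hence form a subgroup isomorphic to $S_4$ meeting $\langle\gn\rangle$ trivially. Your centrality argument for $\gn$ and your upper bound of $48$ then finish the proof unchanged.
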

\begin{proof}
We have the twenty four compatible symbols of the standard arrangement given in Section~\ref{sec:SAAS}.
If $p\lra (q,r,s)$ is one such compatible symbol then the map 
\equa{\gd:\mcl{S}_4\lra \mcl{S}_4,\gd:&P\lra p,-P\lra -p,y\lra q,-y\lra -q,\\
&x\lra r,-x\lra -r,z\lra s,-z\lra -s}
is an automorphism. We also have if $\gf$ is an automorphism then $-\gf$ is also an automorphism and
moreover either $\gf(P)\lra (\gf(y),\gf(x),\gf(z))$ or $-\gf(P)\lra (-\gf(y),-\gf(x),-\gf(z))$ gives rise to a compatible
symbol and the other one is not a compatible symbol. Hence we get 
\equ{\Aut(\mcl{S}_4)=S_4 \oplus (\Z/2\Z).}
This proves the theorem.
\end{proof}

\subsection{An Isomorphism Theorem for Antipodal Point Arrangements on the Two-Dimensional Sphere 
$S^2$}
~\\
Now we prove an isomorphism theorem for antipodal point arrangements on the sphere $S^2$
which can be generalized to higher dimensions in Theorem~\ref{theorem:LCI}.

\subsubsection{\bf{Localization to Antipodal Point Sub-arrangements}}
~\\
Let $\mcl{P}_n=\{\pm P_1,\pm P_2,\ldots, \pm P_n\}\subs S^2$ be an antipodal point arrangement.
We introduce an equivalence relation $\sim$ on the symmetric group $S_n$ as follows. 
Let $g,h\in S_n$ then $g\sim h$ if $g=h$ or $g=h^{-1}$. This is an equivalence relation with reflexive, symmetric and
transitive properties. The equivalence classes being $[\gt,\gt^{-1}]$. Any element of order at most 
two is an equivalence class containing just one element. Remaining equivalence classes has two elements.
The antipode map $a:S^2\lra S^2$ (reflection about the origin in three dimensions) has a negative determinant. 
The line cycles $\gt_i^{+}$ for $P_i, \gt_i^{-}$ for $-P_i$ associated to a pair of antipodes in $\mcl{P}_n$
are mutually inverses of each other as there is a reflection about the origin
is involved. So we actually obtain 
\equ{(n-1)\operatorname{-}\text{cycles with }\gt_i^{-}= (\gt_i^{+})^{-1}\in S_{n-1}(\{1,2,\ldots,i-1,i+1,\ldots,n\}),1\leq i\leq n.}
Now we consider the local scenario by restricting to just four antipodal pairs.
The restriction map $\mid_{local_A}$ and inverse map $(*)^{-1}$ commutes.
We observe that 
\equa{\text{For any four subset }A&\subs \{1,2,\ldots,n\},\\ (\gt_i^{-})_{\mid_{local_{A}}}&=((\gt_i^{+})^{-1})_{\mid_{local_{A}}}=
((\gt_i^{+})_{\mid_{local_{A}}})^{-1},i\in A}
\subsubsection{\bf{The Main Isomorphism Theorem in Two Dimensions}}
Now we state the theorem as follows.
\begin{theorem}
\label{theorem:MITTD}
The following two assertions hold true.
\begin{enumerate}
\item The line cycles of the antipodal pairs of points of a point arrangement 
$\mcl{P}_n\subs S^2$ determines the collection of local $S_4\operatorname{-}$invariant
set of compatible symbols for every four subset of antipodal pairs of points in $\mcl{P}_n$.
\item Let 
$\mcl{P}^1_n=\{\pm P^1_1, \pm P^1_2,\ldots,\pm P^1_n\},\mcl{P}^2_n=\{\pm P^2_1,\pm P^2_2,\ldots,\pm P^2_n\}$
be two point arrangements. Let $(\gt_i^{+})_j$ be the line cycle associated to $P^j_i$ and 
$(\gt_i^{-})_j$ be the line cycle associated to $-P^j_i$ for $j=1,2,1\leq i\leq n$.
There exists a convex positive bijection (an isomorphism) $\gd:\mcl{P}^1_n\lra \mcl{P}_n^2$ if and only if there exist 
\begin{itemize}
\item a permutation $\gp\in S_n$ and
\item a sign vector $\gm=(\gm(1),\gm(2),\ldots,\gm(n))\in (\Z/2\Z)^n=\{\pm 1\}^n$
\end{itemize}
with the property that 
\begin{enumerate}
\item \label{enum:NoFlip}
either 
\equa{&(\gt_{\gp(i)}^{[\gm(i)*(+)]})_2=\gp(\gt_i^{+})_1\gp^{-1},\\
&(\gt_{\gp(i)}^{[\gm(i)*(-)]})_2=\gp(\gt_i^{-})_1\gp^{-1},
1\leq i\leq n}
\item \label{enum:Flip}
or an overall total flip (here we can choose $-\gm$ in place of $\gm$)
\equa{&(\gt_{\gp(i)}^{[\gm(i)*(+)]})_2=[\gp(\gt_i^{+})_1\gp^{-1}]^{-1}=\gp(\gt_i^{-})_1\gp^{-1},\\
&(\gt_{\gp(i)}^{[\gm(i)*(-)]})_2=[\gp(\gt_i^{-})_1\gp^{-1}]^{-1}=\gp(\gt_i^{+})_1\gp^{-1},
1\leq i\leq n}
\end{enumerate}
where 
\begin{itemize}
\item $[\gm(i)*(+ )]=+,[\gm(i)*(-)]=-$ if $\gm(i)=+$. 
\item $[\gm(i)*(+)]=-,[\gm(i)*(-)]=+$ if $\gm(i)=-$.
\end{itemize}
\end{enumerate}
\end{theorem}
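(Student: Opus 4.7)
The strategy is to bootstrap from the four-point Theorem~\ref{theorem:FPSAISO} by localizing to every four-subset of antipodal pairs.

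For assertion~(1), fix any four-subset $A=\{i_1,i_2,i_3,i_4\}\subs\{1,\ldots,n\}$ and observe that the line cycles $\gt_{i_j}^{\pm}$ restricted to indices in $A$ are exactly the line cycles of the sub-arrangement $\{\pm P_{i_1},\pm P_{i_2},\pm P_{i_3},\pm P_{i_4}\}$. Theorem~\ref{theorem:FPSAISO} then reconstructs uniquely the $S_4$-orbit of $24$ compatible symbols attached to this sub-arrangement. Ranging over all $\binom{n}{4}$ four-subsets $A$ produces the entire collection of local $S_4$-invariant compatible symbols, proving~(1).

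For the forward direction of~(2), let $\gd:\mcl{P}^1_n\lra\mcl{P}^2_n$ be a convex positive bijection. Since $\gd(-u)=-\gd(u)$, the map descends to a bijection of antipodal pairs, determining a permutation $\gp\in S_n$ and a sign vector $\gm\in\{\pm 1\}^n$ via $\gd(P^1_i)=\gm(i)P^2_{\gp(i)}$. Restricted to any four-subset, $\gd$ carries compatible symbols to compatible symbols, hence the conjugated cycle $\gp(\gt_i^+)_1\gp^{-1}$ (the cyclic order of $\gp(\{1,\ldots,n\}\setminus\{i\})$ around $\gd(P^1_i)$) must coincide with the line cycle at $\gm(i)P^2_{\gp(i)}$, namely $(\gt_{\gp(i)}^{[\gm(i)*(+)]})_2$; this is condition~\ref{enum:NoFlip}. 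The alternative~\ref{enum:Flip} accommodates the ambiguity in the orientation convention defining line cycles: reversing the sphere's orientation inverts every cycle simultaneously, and corresponds to replacing $\gm$ by $-\gm$ in~\ref{enum:NoFlip} after using $(\gt^+)^{-1}=\gt^-$. This is the $\Z/2\Z$ factor appearing in $\Aut(\mcl{S}_4)=S_4\oplus(\Z/2\Z)$ and mirrors the flip $\gn=-\gd$ of Theorem~\ref{theorem:FPSAISO}.

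For the backward direction of~(2), define $\gd:\mcl{P}^1_n\lra\mcl{P}^2_n$ by $\gd(\pm P^1_i)=\pm\gm(i)P^2_{\gp(i)}$; after possibly replacing $\gm$ by $-\gm$ as noted above, we may assume case~\ref{enum:NoFlip} holds. A positive combination $u=\sum_k a_k v_k$ with $a_k>0$ and $u,v_1,v_2,v_3\in\mcl{P}^1_n$ is equivalent to compatibility of the symbol $u\lra(v_1,v_2,v_3)$ on the four-subset $\{u,v_1,v_2,v_3\}$. By assertion~(1) this compatibility is encoded in the local line cycles, and by hypothesis these cycles are transported correctly onto the image four-subset in $\mcl{P}^2_n$. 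Applying Theorem~\ref{theorem:FPSAISO} to the image four-subset identifies the transported symbol as compatible, showing that $\gd(u)$ is a positive combination of $\gd(v_1),\gd(v_2),\gd(v_3)$. The reverse implication is symmetric, so $\gd$ is a convex positive bijection.

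The main obstacle is the sign and orientation bookkeeping: one must verify that each superscript $\pm$ on $\gt$ twists correctly under conjugation by $\gp$ and multiplication by $\gm(i)$, and that the case~\ref{enum:Flip} flip patches coherently with the flip $\gn=-\gd$ of Theorem~\ref{theorem:FPSAISO} on every four-subset at once. Once this bookkeeping is settled, the rest is a uniform application of the four-point theorem across all $\binom{n}{4}$ four-subsets, together with the observation that every positive combination on $\mbb{PF}^{2+}_{\mbb{F}}$ involves exactly three linearly independent vectors and is therefore witnessed by a single such four-subset.
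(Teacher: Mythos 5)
Your proposal is correct and follows essentially the same route as the paper: both assertions are reduced to the four-point Theorem~\ref{theorem:FPSAISO} by localizing to every four-subset of antipodal pairs, the permutation $\gp$ and sign vector $\gm$ are read off from $\gd(P^1_i)=\gm(i)P^2_{\gp(i)}$, the flip case is absorbed by passing to $-\gd$ (equivalently $-\gm$), and the local identifications patch because every positive-combination relation involves only one four-subset. Your explicit remark that each positive combination is witnessed by a single four-subset is a slightly more detailed justification of the patching step that the paper merely asserts, but it is not a different argument.
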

\begin{proof}
We prove the second assertion first.
Suppose $\gd:\mcl{P}^1_n\lra \mcl{P}_n^2$ is an orientation preserving or orientation reversing isomorphism as it is an isomorphism using Theorem~\ref{theorem:PreservingReversing}. Then the permutation $\gp$ and the signed vector $\gm$
are defined by the equation
\equ{\gd(P_i)=\gm(i)P_{\gp(i)},\gd(-P_i)=-\gm(i)P_{\gp(i)},1\leq i\leq n.}
Now with this definition of $\gp,\gm$ the property~\ref{enum:NoFlip} is satisfied if $\gd$ is orientation preserving and the property~\ref{enum:Flip} is satisfied if it is orientation reversing.
If we choose for $\gm$ the following definition
\equ{\gd(P_i)=-\gm(i)P_{\gp(i)},\gd(-P_i)=\gm(i)P_{\gp(i)},1\leq i\leq n}
then $\gp,\gm$ satisfies the property~\ref{enum:Flip} if $\gd$ is orientation preserving and the property~\ref{enum:NoFlip} is satisfied if it is orientation reversing. This proves one way implication. 

Now we prove the other way implication where we are given the permutation $\gp$ and the signed vector $\gm$ and changing $\gm$
to $-\gm$ if necessary we assume that the property~\ref{enum:NoFlip} holds. 
First we localize to any two corresponding four-antipodal point arrangements
\equ{\{\pm P_i,\pm P_j,\pm P_k,\pm P_l\},\{\pm P_{\gp(i)},\pm P_{\gp(j)},\pm P_{\gp(k)},\pm P_{\gp(l)}\}.}
Since property~\ref{enum:NoFlip} holds and the restriction map and the inverse map commutes with respect to localization 
there is an isomorphic way to identify these two arrangements
using local line cycles via the local chart as the standard arrangement
$\mcl{S}_4$ using Theorem~\ref{theorem:FPSAISO}. Using this chart we conclude that locally there exists an isomorphism
of the four-antipodal point arrangements given by 
\equa{\gd:&P_i\lra \gm(i)P_{\gp(i)},-P_i\lra -\gm(i)P_{\gp(i)},P_j\lra \gm(j)P_{\gp(j)},-P_j\lra -\gm(j)P_{\gp(j)}\\
&P_k\lra \gm(k)P_{\gp(k)},-P_k\lra -\gm(k)P_{\gp(k)},P_l\lra \gm(l)P_{\gp(l)},-P_l\lra -\gm(l)P_{\gp(l)}}
These local isomorphisms patch up and extend uniquely to an isomorphism defined as 
\equ{\gd(P_i)=\gm(i)P_{\gp(i)},\gd(-P_i)=-\gm(i)P_{\gp(i)},1\leq i\leq n.}
We also observe that $-\gd:\mcl{P}^1_n\lra \mcl{P}^2_n$ is an isomorphism. Moreover it is either orientation preserving or orientation reversing.
This proves the isomorphism theorem in two dimensions.

Now we prove the first assertion. The local cycles of four-antipodal subarrangements determine the $S_4$ invariant set of $24$
compatible symbols using Theorem~\ref{theorem:FPSAISO}. 
Hence the first assertion follows and we can write down all the compatible symbols of the given arrangement.
\end{proof}
\section{\bf{Examples of two Non-isomorphic Normal Systems in Three Dimensions over Rationals: Revisited}}
\label{sec:NonIsoNS}
Consider the normal systems whose associated sets of antipodal vectors are given by   
\equ{\mcl{U}_1=\{\pm u_i\mid 1\leq i \leq 6\},\mcl{U}_2=\{\pm v_i\mid 1\leq i \leq 6\}}
with $\mcl{U}_1\cap \mcl{U}_2=\{\pm u_1,\pm u_2,\pm u_3,\pm u_4,\pm u_5\}=\{\pm v_1,\pm v_2,\pm v_3,\pm v_4,\pm 
v_5\}$ where
\equa{&u_1=(1,0,0)=v_1,u_2=(0,1,0)=v_2,u_3=(0,0,1)=v_3,\\
&u_4=\big(\frac 13,\frac 23,\frac 23\big)=v_4, u_5=\big(\frac 19,\frac 49,\frac 89\big)=v_5,
u_6=\big(\frac 6{11},\frac 6{11},\frac 7{11}\big),v_6=\big(\frac 2{11},\frac 6{11},\frac 9{11}\big).}
Here below we find out line cycles of each point with respect to the given notation.

We have proved that these two are non-isomorphic normal systems by associating graphs of compatible pairs 
mentioned in C.~P.~Anil~Kumar~\cite{NRHA}.
For example, from the $15$ equations below for $\mcl{U}_1$ the vertex $\{-u_1,u_2\}$ has degree one and 
is only compatible with $\{u_4,-u_6\}$ (equation $(5)$ in the first set). From the $15$ equations below for $\mcl{U}_2$ we observe that there
is no vertex of degree one as we observe that if a vertex of the associated graph of
compatible pairs has a positive degree then the degree is at least two. 

Now we mention the following $\binom{6}{4}=15$ equations for $\mcl{U}_1$.
\begin{enumerate}
\item $3u_4= u_1+2u_2+2u_3=(1,2,2)$.
\item $9u_5=u_1+4u_2+8u_3=(1,4,8)$.
\item $11u_6=6u_1+6u_2+7u_3=(6,6,7)$.
\item $12u_4=3u_1+4u_2+9u_5=(4,8,8)$.
\item $5u_1+21u_4=2u_2+22u_6=(12,14,14)$.
\item $88u_6=41u_1+20u_2+63u_5=(48,48,56)$.
\item $u_1+9u_5=4u_3+6u_4=(2,4,8)$.
\item $11u_6=3u_1+u_3+9u_4=(6,6,7)$.
\item $9u_1+9u_5=10u_3+22u_6=(12,12,24)$.
\item $9u_5=2u_2+6u_3+3u_4=(1,4,8)$.
\item $18u_4=6u_2+5u_3+11u_6=(6,12,12)$.
\item $54u_5=18u_2+41u_3+11u_6=(6,24,48)$.
\item $44u_6=13u_1+30u_4+9u_5=(24,24,28)$
\item $123u_4=26u_2+45u_5+66u_6=(41,82,82)$.
\item $13u_3+27u_4=27u_5+11u_6=(9,18,31)$.
\end{enumerate}
Then we have by actual computation the line cycles are given as 
\equa{&(\gt_1^{+})_1=(24653)  \text{ at }u_1,(\gt_1^{-})_1=(23564) \text{ at }-u_1,\\
&(\gt_2^{+})_1=(13546) \text{ at }u_2,(\gt_2^{-})_1=(16453) \text{ at }-u_2,\\
&(\gt_3^{+})_1=(16452) \text{ at }u_3,(\gt_3^{-})_1=(12546) \text{ at }-u_3,\\
&(\gt_4^{+})_1=(15326) \text{ at }u_4,(\gt_4^{-})_1=(16235) \text{ at }-u_4,\\
&(\gt_5^{+})_1=(16432) \text{ at }u_5,(\gt_5^{-})_1=(12346) \text{ at }-u_5,\\
&(\gt_6^{+})_1=(15324) \text{ at }u_6,(\gt_6^{-})_1=(14235) \text{ at }-u_6.}

Now we mention the following $\binom{6}{4}=15$ equations for $\mcl{U}_2$.
\begin{enumerate}
\item $3v_4=v_1+2v_2+2v_3=(1,2,2)$.
\item $9v_5=v_1+4v_2+8v_3=(1,4,8)$.
\item $11v_6=2v_1+6v_2+9v_3=(2,6,9)$.
\item $12v_4=3v_1+4v_2+9v_5=(4,8,8)$.
\item $27v_4=5v_1+6v_2+22v_6=(9,18,18)$.
\item $88v_6=7v_1+12v_2+81v_5=(16,48,72)$.
\item $v_1+9v_5=4v_3+6v_4=(2,4,8)$.
\item $v_1+11v_6=3v_3+9v_4=(3,6,9)$.
\item $v_1+27v_5=6v_3+22v_6=(4,12,24)$.
\item $9v_5=2v_2+6v_3+3v_4=(1,4,8)$.
\item $11v_6=2v_2+5v_3+6v_4=(2,6,9)$.
\item $18v_5=2v_2+7v_3+11v_6=(2,8,16)$.
\item $v_1+44v_6=18v_4+27v_5=(9,24,36)$.
\item $66v_6=2v_2+21v_4+45v_5=(12,6,54)$.
\item $v_3+11v_6=3v_4+9v_5=(2,6,10)$.
\end{enumerate}

Then we have by actual computation the line cycles are given as 
\equa{&(\gt_1^{+})_2= (24653) \text{ at }v_1,(\gt_1^{-})_2= (23564) \text{ at }-v_1,\\
&(\gt_2^{+})_2= (13564) \text{ at }v_2,(\gt_2^{-})_2= (14653) \text{ at }-v_2,\\
&(\gt_3^{+})_2= (14652) \text{ at }v_3,(\gt_3^{-})_2= (12564) \text{ at }-v_3,\\
&(\gt_4^{+})_2= (16532) \text{ at }v_4,(\gt_4^{-})_2= (12356) \text{ at }-v_4,\\
&(\gt_5^{+})_2= (14632) \text{ at }v_5,(\gt_5^{-})_2= (12364) \text{ at }-v_5,\\
&(\gt_6^{+})_2= (14532) \text{ at }v_6,(\gt_6^{-})_2= (12354) \text{ at }-v_6.}

As an example we just show that $(\gt_1^{+})_1=(24653)$. For this we observe the following. 
\begin{enumerate}
\item $u_1=-2u_2-2u_3+3u_4$ and $\Det\matthree{0}{-1}{0}{1}{2}{2}{0}{0}{-1}<0$. Hence the compatible cycle is $(243)$.
\item $u_1=-4u_2-8u_3+9u_5$ and $\Det\matthree{0}{-1}{0}{1}{4}{8}{0}{0}{-1}<0$. Hence the compatible cycle is $(253)$.
\item $6u_1=-6u_2-7u_3+11u_6$ and $\Det\matthree{0}{-1}{0}{6}{6}{7}{0}{0}{-1}<0$. Hence the compatible cycle is $(263)$.
\item $3u_1=-4u_2+12u_4-9u_5$ and $\Det\matthree{0}{-1}{0}{1}{2}{2}{-1}{-4}{-8}<0$. Hence the compatible cycle is $(245)$.
\item $5u_1=2u_2-21u_4+22u_6$ and $\Det\matthree{0}{1}{0}{-1}{-2}{-2}{6}{6}{7}<0$. Hence the compatible cycle is $(246)$.
\item $41u_1=-20u_2-63u_5+88u_6$ and $\Det\matthree{0}{1}{0}{6}{6}{7}{-1}{-4}{-8}<0$. Hence the compatible cycle is $(265)$.
\item $u_1=4u_3+6u_4-9u_5$ and $\Det\matthree{0}{0}{1}{1}{2}{2}{-1}{-4}{-8}<0$. Hence the compatible cycle is $(345)$.
\item $3u_1=-u_3-9u_4+11u_6$ and $\Det\matthree{0}{0}{-1}{-1}{-2}{-2}{6}{6}{7}<0$. Hence the compatible cycle is $(346)$.
\item $9u_1=10u_3-9u_5+22u_6$ and $\Det\matthree{0}{0}{1}{6}{6}{7}{-1}{-4}{-8}<0$. Hence the compatible cycle is $(365)$.
\item $13u_1=-30u_4-9u_5+44u_6$ and $\Det\matthree{-1}{-2}{-2}{6}{6}{7}{-1}{-4}{-8}<0$. Hence the compatible cycle is $(465)$.
\end{enumerate}
Hence the $5\operatorname{-}$cycle compatible with all the ten $3\operatorname{-}$cycles is given by $(24653)$.
The computation of the rest of the five cycles is similar. 
\section{\bf{Antipodal Point Arrangements on Higher Dimensional Spheres and Classification of Normal Systems}}
\label{sec:APAHDSCNS}
Here we mainly associate combinatorial invariants to antipodal point arrangements to classify them and hence classify
the normal systems up to an isomorphism. These combinatorial invariants turn out to be oriented cycles of
points of the orthogonally projected arrangements along small sub-arrangements. We begin with the required definitions.
\begin{defn}[Antipodal Point Arrangement on the $k\operatorname{-}$Sphere $S^k$]
\label{defn:kDAPA}
~\\
We say a set $\mcl{P}_n=\{\pm P_1,\pm P_2,\ldots,\pm P_n\}\subs S^k$ of points is a point 
arrangement on the sphere if for any $1\leq i_1<i_2<\ldots<i_{k+1}\leq n$ the points 
\equ{P_{i_1},P_{i_2},\ldots,P_{i_{k+1}}} are linearly independent.
\end{defn}
\begin{defn}[Isomorphism Between two Antipodal Point Arrangements on the $k\operatorname{-}$Sphere $S^k$]
\label{defn:kDIso}
~\\
Two point arrangements 
\equ{\mcl{P}_n=\{\pm P_1,\pm P_2,\ldots,\pm P_n\},\mcl{Q}_m=\{\pm Q_1,\pm Q_2,\ldots,\pm Q_m\} \subs 
S^k} 
are isomorphic if $n=m$ and there is a bijection $\gf:\mcl{P}_n \lra \mcl{Q}_n$ between the two sets such that the 
following occurs.
\begin{itemize}
\item $\gf(-A)=-\gf(A)$ for all $A\in \mcl{P}_n$.
\item  for any $A, A_{i_l}\in \mcl{P}_n,1\leq l\leq k+1,A$ is a positive combination of $A_{i_l},1\leq l\leq k+1$
if and only if $\gf(A)$ is a positive combination of $\gf(A_{i_l}),1\leq l\leq k+1$.
\end{itemize}
We sometimes also say that the isomorphism $\gf$ is a convex positive bijection.
We say $\gf$ is orientation preserving if for any $k+1$ points $A_{i_l}\in \mcl{P}_n,1\leq l\leq k+1$ the ordered tuple $(A_{i_1},\ldots,A_{i_l})$ has positive determinant if and only if the ordered tuple $(\gf(A_{i_1}),\ldots,\gf(A_{i_l}))$ has positive determinant. We say $\gf$ is orientation reversing if for any $k+1$ points $A_{i_l}\in \mcl{P}_n,1\leq l\leq k+1$ the ordered tuple $(A_{i_1},\ldots,A_{i_l})$ has positive determinant if and only if the ordered tuple $(\gf(A_{i_1}),\ldots,\gf(A_{i_l}))$ has negative determinant.
\end{defn}

\begin{theorem}
\label{theorem:PreservingReversingHigherDim}
Let \equ{\mcl{P}_n=\{\pm P_1,\pm P_2,\ldots,\pm P_n\},\mcl{Q}_n=\{\pm Q_1,\pm Q_2,\ldots,\pm Q_n\} \subs S^k}
be two antipodal point arrangements on the sphere $S^k$. Let $\gd:\mcl{P}_n \lra \mcl{Q}_n$ be an isomorphism. Then it either an orientation preserving isomorphism or an orientation reversing isomorphism.
\end{theorem}
\begin{proof}
The proof is similar to that of Theorem~\ref{theorem:PreservingReversing}.
\end{proof}
\section{\bf{Dimension Reduction and Multiple Orthogonally Projected Antipodal Arrangements along Small Sub-arrangements}}
~\\
We begin with a definition.
\begin{defn}[Orthogonally Projected Antipodal Point Arrangements]
~\\
Let $\mcl{P}_n=\{\pm P_1,\ldots ,\pm P_n\}$ be an antipodal point arrangement in the $k\operatorname{-}$dimensional sphere 
$S^k$. Let $\mcl{A}=\{\pm P_{i_1},\pm P_{i_2},\ldots,\pm P_{i_r}\}$ $\subs \mcl{P}_n$
be an antipodal point sub-arrangement with $1\leq r\leq k-2$. 
We can orthogonally project using $Q^{\mcl{A}}$ the sub-arrangement $\mcl{P}_n\bs \mcl{A}$ to the space orthogonal 
to the space spanned by vectors of $\mcl{A}$ to obtain an antipodal point arrangement 
\equ{\mcl{P}_{n-r}^{\mcl{A}}=\{P^{\mcl{A}}_j=Q^{\mcl{A}}(P_j),-P^{\mcl{A}}_j=Q^{\mcl{A}}(-P_j)
\mid 1\leq j\leq n,j\neq i_l,1\leq l\leq r\}} 
in the $(k-r)$-dimensional sphere $S^{k-r}$.
\end{defn}
Now we prove a theorem on the signs.
\begin{theorem}[Sign of the Combination does not change after Projection]
\label{theorem:SCSP}
~\\
Let $\mcl{P}_n=\{\pm P_1,\ldots ,\pm P_n\}$ be an antipodal point arrangement in the $k\operatorname{-}$dimensional sphere 
$S^k$. Let $\mcl{A}=\{\pm P_n\}$. Let $\mcl{P}^{\mcl{A}}_{n-1}=\{\pm P_1^{\mcl{A}},\pm P_2^{\mcl{A}},\ldots,\pm P_{n-1}^{\mcl{A}}\}$ denote the projected arrangement.
Suppose \equ{P_i=[(x^i_1,x^i_2,\ldots,x^i_k,x^i_{k+1})],-P_i=-[(x^i_1,x^i_2,\ldots,x^i_k,x^i_{k+1})],1\leq i\leq n.} 
Suppose we have 
\equ{(x^j_1,x^j_2,\ldots,x^j_k,x^j_{k+1})=\us{l=1}{\os{k}{\sum}}\gl_l(x^{i_l}_1,x^{i_l}_2,\ldots,x^{i_l}_k,x^{i_l}_{k+1})
+\gl_n(x^n_1,x^n_2,\ldots,x^n_k,x^n_{k+1})}
for some $j\nin\{i_1,i_2,\ldots,i_k,n\}, \gl_l,\gl_n\in \mbb{R}^{*}$.
Suppose we have 
\equ{P_j^{\mcl{A}}=Q^{\mcl{A}}(x^j_1,x^j_2,\ldots,x^j_k,x^j_{k+1})=\us{l=1}{\os{k}{\sum}}\gb_lP^{\mcl{A}}_{i_l}=\us{l=1}{\os{k}{\sum}}\gb_lQ^{\mcl{A}}(x^{i_l}_1,x^{i_l}_2,\ldots,x^{i_l}_k,x^{i_l}_{k+1})}
then \equ{sign(\gl_l)=sign(\gb_l),1\leq l\leq k.}
\end{theorem}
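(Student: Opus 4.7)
The plan is to apply the $\mbb{F}$-linear map $Q^{\mcl{A}}$ directly to the hypothesized decomposition of $v_j := (x^j_1,\ldots,x^j_{k+1})$ as a combination of $v_{i_l} := (x^{i_l}_1,\ldots,x^{i_l}_{k+1})$ and $v_n := (x^n_1,\ldots,x^n_{k+1})$, and then read off the claim from the uniqueness of expansions in a linearly independent set.

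First I would recall from Section~\ref{sec:EOP} that $Q^{\mcl{A}}$ is the $\mbb{F}$-linear orthogonal projection onto $\mrm{span}(v_n)^{\perp}$, so in particular $Q^{\mcl{A}}(v_n)=0$. Applying $Q^{\mcl{A}}$ to the given equation $v_j = \us{l=1}{\os{k}{\sum}}\gl_l v_{i_l} + \gl_n v_n$ then yields
\equ{Q^{\mcl{A}}(v_j) = \us{l=1}{\os{k}{\sum}}\gl_l\, Q^{\mcl{A}}(v_{i_l}).}

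Second, I would verify that the projected vectors $Q^{\mcl{A}}(v_{i_1}),\ldots,Q^{\mcl{A}}(v_{i_k})$ are linearly independent over $\mbb{F}$. Any relation $\us{l=1}{\os{k}{\sum}}c_l Q^{\mcl{A}}(v_{i_l})=0$ would force $\us{l=1}{\os{k}{\sum}}c_l v_{i_l} \in \Ker(Q^{\mcl{A}})=\mrm{span}(v_n)$, i.e.\ $\us{l=1}{\os{k}{\sum}}c_l v_{i_l} = d\,v_n$ for some $d \in \mbb{F}$; but Definition~\ref{defn:kDAPA} guarantees that $v_{i_1},\ldots,v_{i_k},v_n$ are linearly independent, so every $c_l$ vanishes. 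Combining this independence with the two expressions $Q^{\mcl{A}}(v_j) = \us{l=1}{\os{k}{\sum}}\gl_l Q^{\mcl{A}}(v_{i_l}) = \us{l=1}{\os{k}{\sum}}\gb_l Q^{\mcl{A}}(v_{i_l})$ forces $\gb_l=\gl_l$ for every $l$, and hence $sign(\gb_l)=sign(\gl_l)$.

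I do not anticipate any substantive obstacle: the core observation is simply that $Q^{\mcl{A}}$ annihilates the $v_n$ direction, so the $\gl_n$ term disappears upon projection, while the antipodal arrangement hypothesis is exactly strong enough to keep the remaining projected vectors independent in the hyperplane $\mrm{span}(v_n)^{\perp}$.
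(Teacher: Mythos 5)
Your proof is correct and is precisely the natural argument the paper has in mind: the paper's own proof consists solely of the sentence ``This theorem is immediate,'' and your write-up (apply the linear map $Q^{\mcl{A}}$, kill the $v_n$ term, and invoke linear independence of the projected vectors, which follows from the $(k+1)$-wise independence in Definition~\ref{defn:kDAPA}) supplies exactly the details being omitted. You in fact prove the stronger conclusion $\gb_l=\gl_l$, which of course implies the stated equality of signs.
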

\begin{proof}
This theorem is immediate.
\end{proof}

Now we prove an isomorphism theorem about signs for antipodal point arrangements on spheres $S^k$.
\begin{theorem}[An isomorphism theorem]
\label{theorem:SL}
~\\
Let $\mcl{P}^j_n=\{\pm P^j_1,\pm P^j_2,\ldots,\pm P^j_n\}$ be two antipodal point arrangements in $S^k$ for $j=1,2$. 
Let \equ{\pm P^j_{i_1},\pm P^j_{i_2},\ldots,\pm P^j_{i_{k+1}},\pm P^j_l}
be $k+2$ antipodal pairs of points of choice of the arrangement. Let $A=\{i_1,i_2,\ldots,i_{k+1}\}$. 
With respect to the set $A$ let \equ{P^j_l=\us{r=1}{\os{m}{\sum}}(\gl^l_{i_r})^j_AP^j_{i_r},j=1,2.}
Suppose we have \equ{sign((\gl^l_{i_r})^1_A)=sign((\gl^l_{i_r})^2_A)\text{ for any such choice.}} Then the map 
\equ{\gd:P^1_i\lra P^2_i,\gd:-P^1_i\lra -P^2_i, 1\leq i\leq n}
is an isomorphism of antipodal point arrangements $\mcl{P}^j_n$.
\end{theorem}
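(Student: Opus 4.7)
The plan is to verify the two conditions in Definition~\ref{defn:kDIso} directly: antipode preservation, and preservation of positive combinations. The first is tautological from the way $\gd$ is defined. The substance of the proof lies in converting the sign hypothesis on the $(\gl^l_{i_r})^j_A$ into the "positive combination" condition, which differs only by book-keeping of antipodes.

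First I would fix a point $A\in \mcl{P}^1_n$ and a linearly independent $(k+1)$-tuple $A_{i_1},\ldots,A_{i_{k+1}}\in \mcl{P}^1_n$ with $A$ a positive combination of them. Write $A=\gep P^1_l$ and $A_{i_r}=\gep_r P^1_{i_r}$ for signs $\gep,\gep_r\in\{\pm 1\}$, where the indices $l,i_1,\ldots,i_{k+1}$ are distinct (since antipodes of the base points cannot appear in a base, and the base is linearly independent by Definition~\ref{defn:kDAPA}, applied to $\mcl{P}^1_n$). The expansion guaranteed by the hypothesis,
\equ{P^1_l=\us{r=1}{\os{k+1}{\sum}}(\gl^l_{i_r})^1_A\,P^1_{i_r},}
rewrites as
\equ{A=\gep P^1_l=\us{r=1}{\os{k+1}{\sum}}\bigl(\gep\,\gep_r\,(\gl^l_{i_r})^1_A\bigr)\,A_{i_r},}
so "$A$ is a positive combination of the $A_{i_r}$" is equivalent to the sign conditions $\gep\,\gep_r\,(\gl^l_{i_r})^1_A>0$ for every $r$. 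Repeating the computation for $\gd(A)=\gep P^2_l$ and $\gd(A_{i_r})=\gep_r P^2_{i_r}$ using the corresponding expansion with coefficients $(\gl^l_{i_r})^2_A$, the analogous equivalence reads $\gep\,\gep_r\,(\gl^l_{i_r})^2_A>0$. Since by hypothesis $\mathrm{sign}\bigl((\gl^l_{i_r})^1_A\bigr)=\mathrm{sign}\bigl((\gl^l_{i_r})^2_A\bigr)$ for every $r$, the two sign conditions are equivalent. The converse direction (a positive combination in $\mcl{P}^2_n$ pulls back to one in $\mcl{P}^1_n$) is symmetric in the two arrangements and needs no separate argument. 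The antipode condition $\gd(-P^1_i)=-P^2_i=-\gd(P^1_i)$ is immediate from the definition of $\gd$.

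The main (and essentially the only) obstacle is purely notational: one must carefully separate the sign data of a base element $A_{i_r}$ (captured by $\gep_r$) from the sign of its scalar coefficient in the unique expansion of $A$ (captured by $(\gl^l_{i_r})^j_A$), and observe that "positive combination" depends only on the product of these two signs together with $\gep$. No additional ingredient is needed beyond the hypothesis and the defining property of an antipodal point arrangement (Definition~\ref{defn:kDAPA}), which ensures that any $k+1$ of the underlying representatives $P^j_{i_1},\ldots,P^j_{i_{k+1}}$ are linearly independent, so that the expansion coefficients $(\gl^l_{i_r})^j_A$ are uniquely determined and the "positive combination" assertion in Definition~\ref{defn:kDIso} is well posed on both sides. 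The proof thus reduces to one line once the sign book-keeping is set up, and should close with the remark that $-\gd$ is also an isomorphism, in parallel with the two-dimensional Theorem~\ref{theorem:MITTD}.
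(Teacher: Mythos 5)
Your proposal is correct and matches the paper's intent: the paper simply declares the theorem ``immediate,'' and what you have written is precisely the definition-unwinding that justifies that claim, namely that the sign of the coefficient of $A_{i_r}=\gep_r P^1_{i_r}$ in the unique expansion of $A=\gep P^1_l$ is $\gep\,\gep_r$ times the sign of $(\gl^l_{i_r})^1_A$, so equality of the $\gl$-signs across the two arrangements transfers the positive-combination property in both directions. No gap; the only housekeeping worth keeping explicit is that the arrangement condition forces every coefficient $(\gl^l_{i_r})^j_A$ to be nonzero, so the sign hypothesis is well posed, which you have effectively noted.
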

\begin{proof}
This theorem is immediate and $\gd$ is a convex positive bijection.
\end{proof}
\subsection{Line Cycle Invariants Associated to Points of the Projected Arrangements}
~\\
Let $\mcl{P}_n=\{\pm P_1,\pm P_2,\ldots,\pm P_n\}$ be an antipodal
point arrangement in $S^k$. Let 
\equ{\mcl{A}=\{\pm P_{j_1},\pm P_{j_2},\ldots,\pm P_{j_{k-2}}\}\subs \mcl{P}_n} be a subset of cardinality $k-2$. Then 
consider the projected arrangement $\mcl{P}^{\mcl{A}}_{n-k+2}\subs S^2$. For various choices of $\mcl{A}$, these arrangements on the
two-dimensional spheres give rise to clockwise oriented line cycles at each point of $\mcl{P}^{\mcl{A}}_{n-k+2}$
denoted as follows:
\equa{(\gt_j^{+})^{\mcl{A}}&\in S_{n-k+1}\big(\{1,2,\ldots,n\}\bs \{j_1,j_2,\ldots,j_{k-2},j\}\big) 
\text{ at }P^{\mcl{A}}_j,\\
(\gt_j^{-})^{\mcl{A}}&\in S_{n-k+1}\big(\{1,2,\ldots,n\}\bs \{j_1,j_2,\ldots,j_{k-2},j\}\big) 
\text{ at }-P^{\mcl{A}}_j}
both of which are $(n-k+1)\operatorname{-}$cycles which are mutual inverses of each other for $1\leq j\leq n,j\neq j_l,1\leq l\leq k-2$.

\section{\bf{The Main Theorem}}
\label{sec:FMT}
Now we prove the isomorphism theorem for the line cycle invariants.
\begin{thmOmega}[Main Theorem]
\namedlabel{theorem:LCI}{$\Gom$}
~\\
Let $\mcl{P}_n=\{\pm P_1,\pm P_2,\ldots,\pm P_n\}$ be an antipodal point 
arrangement on $S^k$. The line cycle invariants of antipodal pairs 
\equ{P_j,-P_j,1\leq j\leq n} given by mutually inverse cycles
\equa{(\gt_j^{+})^{\mcl{A}}&\in S_{n-k+1}\big(\{1,2,\ldots,n\}\bs \{j_1,j_2,\ldots,j_{k-2},j\}\big) 
\text{ at }P^{\mcl{A}}_j,\\
(\gt_j^{-})^{\mcl{A}}&\in S_{n-k+1}\big(\{1,2,\ldots,n\}\bs \{j_1,j_2,\ldots,j_{k-2},j\}\big) 
\text{ at }-P^{\mcl{A}}_j}
after projection along the small sub-arrangement 
\equ{\mcl{A}=\{\pm P_{j_1},\pm P_{j_2},\ldots,\pm P_{j_{k-2}}\}\subs \mcl{P}_n\bs\{\pm P_j\}} 
given by 
\equ{\mcl{P}_{n-k+2}^{\mcl{A}}=\{\pm P_i^{\mcl{A}}\mid i\in \{1,2,\ldots,n\}\bs \{j_1,j_2,\ldots,j_{k-2}\}\}} 
for all such possible choices of $\mcl{A}$ determine the 
antipodal point arrangement up to an isomorphism, that is,   
for $l=1,2$ let  
\equ{\mcl{P}^l_n=\{\pm P^l_1,\pm P^l_2,\ldots,\pm P^l_n\}\subs S^k} 
be two antipodal point arrangements with the line cycle invariants of antipodal pairs 
\equ{P^l_j,-P^l_j,1\leq j\leq n} given by mutually inverse cycles
\equa{(\gt_j^{+})^{\mcl{A}_l}_l&\in S_{n-k+1}\big(\{1,2,\ldots,n\}\bs \{j_1,j_2,\ldots,j_{k-2},j\}\big) 
\text{ at }(P^l_j)^{\mcl{A}_l},\\
(\gt_j^{-})^{\mcl{A}_l}_l&\in S_{n-k+1}\big(\{1,2,\ldots,n\}\bs \{j_1,j_2,\ldots,j_{k-2},j\}\big) 
\text{ at }-(P^l_j)^{\mcl{A}_l}}
after projection along the small sub-arrangement 
\equ{\mcl{A}_l=\{\pm P^l_{j_1},\pm P^l_{j_2},\ldots,\pm P^l_{j_{k-2}}\}\subs \mcl{P}^l_n\bs\{\pm P^l_j\},}
then they are isomorphic by a convex positive bijection if and only if there exist  
\begin{enumerate}
\item a permutation $\gp \in S_n$ and 
\item a sign vector $\gm\in (\Z/2\Z)^n=\{\pm \}^n$
\end{enumerate}
such that for all invariant line cycles either 
\equa{(\gt^{[\gm(j)*(+)]}_{\gp(j)})^{\gp(\mcl{A}_1)}_2 = \gp(\gt^{+}_j)^{\mcl{A}_1}_1\gp^{-1},\\
\text{ which is equivalent to }(\gt^{[\gm(j)*(-)]}_{\gp(j)})^{\gp(\mcl{A}_1)}_2 = \gp(\gt^{-}_j)^{\mcl{A}_1}_1\gp^{-1}}
holds or with a total flip the following holds. \emph{(}Also we could flip the sign of $\gm$\emph{)}
\equa{(\gt^{[\gm(j)*(-)]}_{\gp(j)})^{\gp(\mcl{A}_1)}_2 = \gp(\gt^{+}_j)^{\mcl{A}_1}_1\gp^{-1},\\
\text{ which is equivalent to }(\gt^{[\gm(j)*(+)]}_{\gp(j)})^{\gp(\mcl{A}_1)}_2 = \gp(\gt^{-}_j)^{\mcl{A}_1}_1\gp^{-1}}
where 
\begin{enumerate}
\item $\gp(\mcl{A}_1)=\{\pm P^2_{\gp(j_1)},\pm P^2_{\gp(j_2)},\ldots,\pm P^2_{\gp(j_{k-2})}\}$.
\item \begin{itemize}
\item $[\gm(j)*(+)]=+,[\gm(j)*(-)]=-$ if $\gm(j)=+$.
\item $[\gm(j)*(+)]=-,[\gm(j)*(-)]=+$ if $\gm(j)=-$.
\end{itemize}
\end{enumerate}
\end{thmOmega}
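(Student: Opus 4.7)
The strategy is to reduce the $k$-dimensional classification problem to the two-dimensional case (Theorem~\ref{theorem:MITTD}) by systematically using the orthogonal projections constructed in Section~\ref{sec:EOP}. Two auxiliary results serve as the bridge: Theorem~\ref{theorem:SCSP}, which shows that signs of coefficients of positive combinations are preserved under orthogonal projection, and Theorem~\ref{theorem:SL}, which reduces the check of the convex positive bijection property to checking agreement of signs of all coefficients in all $(k+1)$-term expressions. Armed with these, an isomorphism in the higher-dimensional sphere can be detected by examining signs in two-dimensional projected arrangements, where Theorem~\ref{theorem:MITTD} applies directly.

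For the necessity direction, I would assume that a convex positive bijection $\gd:\mcl{P}^1_n\lra \mcl{P}^2_n$ exists and define $\gp\in S_n$ and $\gm\in\{\pm 1\}^n$ by the rule $\gd(P^1_i)=\gm(i)P^2_{\gp(i)}$. For each sub-arrangement $\mcl{A}=\{\pm P^1_{j_1},\ldots,\pm P^1_{j_{k-2}}\}$, the map $\gd$ induces a convex positive bijection between the projected two-dimensional arrangements $(\mcl{P}^1_n)^{\mcl{A}}\lra (\mcl{P}^2_n)^{\gp(\mcl{A})}$, by Theorem~\ref{theorem:SCSP}. Applying the two-dimensional Theorem~\ref{theorem:MITTD} to each such induced isomorphism produces precisely the stated line cycle identities on the corresponding cycles, with the total-flip alternative absorbing the ambiguity between $\gd$ and $-\gd$.

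For the sufficiency direction, suppose we are given $\gp,\gm$ satisfying the line cycle identities; after possibly replacing $\gm$ by $-\gm$ we may assume the no-flip alternative holds. Define $\gd:\mcl{P}^1_n\lra \mcl{P}^2_n$ by $\gd(P^1_i)=\gm(i)P^2_{\gp(i)}$ and $\gd(-P^1_i)=-\gm(i)P^2_{\gp(i)}$. By Theorem~\ref{theorem:SL}, it suffices to show that in the unique expression $P^1_l=\sum_{r=1}^{k+1}\gl_r P^1_{i_r}$, the sign of each $\gl_r$ agrees with the sign of the corresponding coefficient of $P^2_{\gp(l)}$ in terms of the $P^2_{\gp(i_r)}$, after the natural twist by $\gm$. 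To extract the sign of a specific $\gl_r$, I would take $\mcl{A}$ to be the $(k-2)$-set $\{\pm P^1_{i_s}:s\notin\{r,r',r''\}\}$ for two further indices $r',r''$. Projecting along $\mcl{A}$ collapses the ambient relation to a positive combination among four antipodal pairs on the two-sphere, and by an iterated application of Theorem~\ref{theorem:SCSP} the signs of $\gl_r,\gl_{r'},\gl_{r''}$ are preserved. The hypothesised line cycle identities restricted to this projected four-antipodal sub-arrangement, combined with Theorem~\ref{theorem:FPSAISO}, force the two local four-antipodal arrangements to be sign-coherently isomorphic. Hence the signs of the relevant coefficients on both sides match, and varying $r$ and the two auxiliary indices $r',r''$ across all positions recovers the signs of every $\gl_r$.

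The main obstacle will be the book-keeping of signs across the different projections. One must verify that the local sign information produced by Theorem~\ref{theorem:MITTD}, when applied to each of the $\binom{n}{k-2}$ possible $(k-2)$-sub-arrangements, assembles consistently into the single globally defined $\gd$, and that the dichotomy between the no-flip and total-flip alternatives is respected uniformly across all projections rather than requiring different flip choices for different $\mcl{A}$. This issue already appeared in the proof of Theorem~\ref{theorem:MITTD}, where it was resolved by the device of replacing $\gm$ by $-\gm$ if necessary, so the same mechanism, normalised against one fixed projection, should suffice here. The remaining step, that iterated applications of Theorem~\ref{theorem:SCSP} correctly reduce each $(k+2)$-point configuration to a four-antipodal sub-arrangement on the two-sphere, is routine given the orthogonal projection machinery of Section~\ref{sec:EOP}.
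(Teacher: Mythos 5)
Your proposal is correct and follows essentially the same route as the paper: reduce to the two-dimensional case by orthogonally projecting along $(k-2)$-subsets, invoke Theorem~\ref{theorem:SCSP} (iterated) to transport coefficient signs through the projections, determine those signs locally from the line cycles via Theorems~[\ref{theorem:FPSAISO},\ref{theorem:MITTD}], and conclude with Theorem~\ref{theorem:SL}. In fact your write-up makes explicit the step the paper leaves implicit, namely that projecting out all but three of the $k+1$ base points collapses each $(k+2)$-point relation to a four-antipodal-pair configuration on $\mbb{PF}^{2+}_{\mbb{F}}$ where the two-dimensional dictionary applies.
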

\begin{proof}
	A convex positive bijection is either orientation preserving or orientation reversing using Theorem~\ref{theorem:PreservingReversingHigherDim}.
	The orientation preserving or the orientation reversing convex positive bijection $\gd$ and the permutation $\gp\in S_n$ with a sign vector $\gm=(\gm(1),\ldots,\gm(n))\in (\Z/2\Z)^n=\{\pm \}^n$ are related by the following equations.
	\equ{\text{For }1\leq i\leq n,\gd(P^1_i)=\gm(i)P^2_{\gp(i)}.}
	We can assume if necessary and without loss of generality that $\gp$ is trivial and $\gm=(+,\ldots,+)$ so that $\gd(P^1_i)=P^2_i,1\leq i\leq n$.
	If $\gd$ is orientation preserving then we have 
	\equ{(\gt^{+}_{j})^{\mcl{A}_2}_2 = (\gt^{+}_j)^{\mcl{A}_1}_1 \text{ and }(\gt^{-}_{j})^{\mcl{A}_2}_2 = (\gt^{-}_j)^{\mcl{A}_1}_1.}
	If $\gd$ is orientation reversing then we have 	
	\equ{(\gt^{-}_{j})^{\mcl{A}_2}_2 = (\gt^{+}_j)^{\mcl{A}_1}_1 \text{ and }(\gt^{+}_{j})^{\mcl{A}_2}_2 = (\gt^{-}_j)^{\mcl{A}_1}_1.}
	Here \equ{\mcl{A}_l=\{\pm P^l_{j_1},\pm P^l_{j_2},\ldots,\pm P^l_{j_{k-2}}\}\subs \mcl{P}^l_n\bs\{\pm P^l_j\},l=1,2.}
	
To determine the arrangement up to an isomorphism we do the following. For $r=1,2$ let \equ{P^r_{i_1},P^r_{i_2},\ldots,P^r_{i_{k+1}},P^r_l}
be $k+2$ points of the arrangements. With respect to $A=\{i_1,\ldots,i_{k+1}\}$ define the coefficients $(\gl^l_{i_j})^1_A,(\gl^l_{i_j})^2_A$ by  
letting \equ{P^1_l=\us{j=1}{\os{k+1}{\sum}}(\gl^l_{i_j})^1_AP^1_{i_j},
	P^2_l=\us{j=1}{\os{k+1}{\sum}}(\gl^l_{i_j})^2_AP^2_{i_j}.}
To determine the arrangement up to an isomorphism using Theorem~\ref{theorem:SL} 
we need to determine the signs of $(\gl^l_{i_j})^1_A,(\gl^l_{i_j})^2_A,1\leq j \leq k+1$ 
using the combinatorial invariants.

Now when $k=2$ we know that the line cycles give rise to the compatible set of $S_4\operatorname{-}$invariant set of $24$
symbols locally for all the sub-arrangements using Theorem~\ref{theorem:MITTD}. 
These determine the signs and hence the antipodal point arrangement on $S^2$ is determined 
up to an isomorphism.

Now we consider a general value of $k$. Now using various orthogonal projections along subsets of $\{1,\ldots,n\}$ 
and repeated application of Theorem~\ref{theorem:SCSP} we can recover the signs of the coefficients $(\gl^l_{i_j})^1_A,(\gl^l_{i_j})^2_A$ 
from the combinatorial line cycle invariants. Now we use Theorem~\ref{theorem:SL}.

The rest of Theorem~\ref{theorem:LCI} also follows.
\end{proof}

\section{\bf{Open Questions: The Enumeration Problem and The Problem of a Complete List of Representatives}}
\label{sec:ENUMREP}
We have solved the classification problem of isomorphism classes of normal systems in any dimension. Now we mention the two questions which are still open.

\begin{ques}
\label{ques:ENUMREP}
~\\
Let $n,m$ be positive integers.
\begin{enumerate}
\item (Enumeration Problem): Enumerate the isomorphism classes of normal systems in $\mbb{R}^m$
of cardinality $n$.
\item (Representation Problem): Construct a complete list of representatives for the list of
isomorphism classes of normal systems in $\mbb{R}^m$ of cardinality $n$.
\end{enumerate}
\end{ques}

\bibliographystyle{abbrv}

\begin{thebibliography}{1}
\bibitem{MR3823190} M.~Aigner, G.~M.~Ziegler,
{\it Proofs from THE BOOK}, Sixth Edition, ISBN-13: 978-3-662-57264-1; 978-3-662-57265-8, Springer, Berlin, 2018, viii+326 pp, \url{https://doi.org/10.1007/978-3-662-57265-8}, MR3823190

\bibitem{CPAK} C.~P.~Anil Kumar,
{\it On the triangles in certain types of line arrangements}, Discrete Mathematics, Algorithms and Applications, Online Published Nov. 2020, \url{https://doi.org/10.1142/S1793830921500312}, \url{https://arxiv.org/pdf/1906.05120.pdf}

\bibitem{NRHA} C.~P.~Anil Kumar,
{\it On Infinity Type Hyperplane Arrangements and Convex Positive Bijections: Preprint} (2020), 24 pages, \url{https://arxiv.org/pdf/1711.07030.pdf} 

\bibitem{MR0583961} J.~E.~Goodman and R.~Pollack,
{\it On the Combinatorial Classification of Non-degenerate Configurations in the Plane}, Journal of Combinatorial Theory Series A.  29(2), (1980), 220--235, ISSN 0097-3165, \url{https://doi.org/10.1016/0097-3165(80)90011-4}, MR0583961

\end{thebibliography}
\def\cprime{$'$}

\end{document}